\newcommand{\Ac}{\mathcal{A}}
\newcommand{\1}{\mathbbm{1}}
\DeclareMathOperator{\diam}{diam}
\DeclarePairedDelimiter{\abs}{|}{|}
\DeclarePairedDelimiter{\ceil}{\lceil}{\rceil}
\newtheorem{theorem}{Theorem}[section]
\newtheorem*{theorem*}{Theorem}
\newtheorem{lemma}[theorem]{Lemma}
\newtheorem{proposition}[theorem]{Proposition}
\newtheorem{corollary}[theorem]{Corollary}
\theoremstyle{definition}
\newcommand{\hide}[1]{}
\title{On the Diameter of Finite Sidon Sets}
\author{Daniel Carter\thanks{Princeton University, email: dc65@princeton.edu}\and Zach Hunter\thanks{ETH, email: zach.hunter@math.ethz.ch}\and Kevin O'Bryant\thanks{City University of New York, College of Staten Island and The Graduate Center, email: kevin.obryant@csi.cuny.edu}}
\date{}
\begin{document}

\maketitle

\begin{abstract}
We prove that the diameter of a Sidon set (also known as a Babcock sequence, Golomb ruler, or $B_2$ set) with $k$ elements is at least $k^2-b k^{3/2}-O(k)$ where $b\le 1.96365$, a comparatively large improvement on past results. Equivalently, a Sidon set with diameter $n$ has at most $n^{1/2}+0.98183n^{1/4}+O(1)$ elements. The proof is conceptually simple but very computationally intensive, and the proof uses substantial computer assistance. We also provide a proof of $b\le 1.99058$ that can be verified by hand, which still improves on past results. Finally, we prove that $g$-thin Sidon sets (aka $g$-Golomb rulers) with $k$ elements have diameter at least $g^{-1} k^2 - (2-\varepsilon)g^{-1}k^{3/2} - O(k)$, with $\varepsilon\ge 0.02g^{-2}$.
\end{abstract}

\section{Introduction}
A Sidon set is a set of integers $\Ac$ that does not contain any solutions to
    \[a-b=c-d,\quad a,b,c,d\in \Ac\]
except for the trivial types $a=b,c=d$ and $a=c,b=d$. These sets are also called Babcock sequences~\cite{1953.Babcock,1976.Johannsen&Paulsen}, Golomb rulers~\cite{mg, dr} and $B_2$ sets~\cite{sidon,et}. All intervals in this paper are intervals of integers; for example, $[1,4)=\{1,2,3\}$.

The first question asked by Sidon was to bound $k=\abs{\Ac}$, subject to the constraints that $\Ac$ is a Sidon set contained in $[0,n)$~\cite{sidon}; we set $R(n)$ to be the maximum cardinality of a Sidon set contained in $[0,n)$. Equivalently, one can ask for a bound on $\diam(\Ac)\coloneqq \max\Ac-\min\Ac$ in terms of $k$. In 1938, Singer constructed Sidon sets with $k=q+1$ elements, where $q$ is any prime power, and diameter less than $k^2-k$~\cite{singer}. In 1941, Erd\H{o}s and Tur\'an proved an inequality that implies that the diameter of a $k$-element Sidon set must be at least $k^2-2k^{3/2}-O(k)$; equivalently, $R(n)\le n^{1/2}+n^{1/4}+O(1)$~\cite{et} (though in their paper the constant on $k^{3/2}$ or $n^{1/4}$ was not given explicitly). We define
\[ b_\infty \coloneqq \limsup_{k\to\infty}\frac{k^2-\diam(\Ac_k)}{k^{3/2}} \]
where for each $k$, $\Ac_k$ is a Sidon set with $k$ elements and minimum possible diameter; the results of Singer and Erd\H{o}s--Tur\'an show that $0\le b_\infty\le 2$.

While there have been hundreds of articles about Sidon sets (see~\cite{obryantbiblio} for an extensive bibliography), these bounds on $b_\infty$ remained unimproved until recently. In 1969, Lindstr\"om gave a different argument that gives the same bound as Erd\H{o}s--Tur\'an~\cite{lindstrom}. In 2021, Balogh--F\"uredi--Roy~\cite{bfr} combined this proof with the Erd\H{o}s--Tur\'an proof and obtained $R(n) <n^{1/2}+0.998n^{1/4}+O(1)$; equivalently, $b_\infty \le 1.996$. The improvement found by Balogh--F\"uredi--Roy involves playing the proofs of Erd\H{o}s--Tur\'an and Lindstr\"om against each other: a set that forces a part of the Erd\H{o}s--Tur\'an argument to be weak allows a part of the Lindstr\"om argument to become strong.

In 2022, the third author showed how such an improvement could be made using only the Erd\H{o}s--Tur\'an argument, both simplifying the proof that $b_\infty < 2$ and improving the upper bound on $b_\infty$ to at most $1.99405$ \cite{summerpaper}.

In the present work, we give an even simpler exploitation of the Erd\H{o}s--Tur\'an method, and prove in Section~\ref{sec:nwindow} that
\[ b_\infty \le 1.96365, \]
an order of magnitude larger improvement than previous results. While this method is logically simpler than previous two improvements, it is computationally much more involved, and a full verification uses substantial computer assistance. In light of this, in Section~\ref{sec:twowindow} we provide a human-verifiable proof that
\[ b_\infty \le 1.99058. \]
using a simplified version of the main result.

Finally, in Section~\ref{sec:thin}, we consider $g$-thin Sidon sets (also called $g$-Golomb rulers \cite{ggolomb}): a set $\Ac$ is a \textit{$g$-thin Sidon set} if for each $x\neq 0$ there are at most $g$ pairs $(a,b)\in \Ac^2$ with $x=a-b$. In~\cite{bfr}, Balogh--F\"uredi--Roy note that their method also gives an improved bound on the diameter of $g$-thin Sidon sets, but they didn't make the improvement explicit. Our method gives the bound that if $\Ac$ is a $g$-thin Sidon set, then
\[ \diam(\Ac)\ge \frac{1}{g}k^2 - \frac{2-\varepsilon}{g}k^{3/2}-O(k) \]
with $\varepsilon\ge \frac{1}{50g^2}$. Here, the constant $1/50$ is not optimized.

\subsection{The Erd\H{o}s--Tur\'an Sidon Set Equality}

Two inequalities are used in the original Erd\H{o}s--Tur\'an proof. One can name the slack in those inequalities $V,S$ (as done in~\cite{summerpaper}), and arrive at the Erd\H{o}s-Tur\'an Sidon Set Equality (hereafter ETSSE):

\begin{theorem}[ETSSE~\cite{summerpaper}]\label{thm:erdosturan}
Let $\Ac$ be a finite Sidon set and $T$ be a positive integer. Then
\[ \diam(\Ac) = \frac{\abs{\Ac}^2 T^2}{T(T+\abs{\Ac}-1) - (2S(\Ac,T)+V(\Ac,T))} - T \]
where we define $A_i^{(T)} = \abs{\Ac \cap [i-T,i)}$,
\[ S(\Ac,T) = \sum_{\substack{r=1\\r\not\in \Ac-\Ac}}^{T-1} (T-r),  \quad\text{ and }\quad
V(\Ac,T) = \sum_{i=\min(\Ac)+1}^{T+\max(\Ac)} \left( A_i^{(T)} - \frac{\abs{\Ac} T}{T+\diam(\Ac)}\right)^2. \]
\end{theorem}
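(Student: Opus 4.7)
The plan is to establish the identity by double-counting the quantity $\sum_i A_i^{(T)}$ and $\sum_i (A_i^{(T)})^2$ over the active range $i\in [\min(\Ac)+1,\max(\Ac)+T]$, and then combining these via the standard variance identity. Throughout, write $k=\abs{\Ac}$ and $n=\diam(\Ac)$, so the active range contains exactly $n+T$ integers.

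\textbf{Step 1 (first moment).} Each $a\in\Ac$ lies in the window $[i-T,i)$ for exactly $T$ values of $i$ (namely $i\in[a+1,a+T]$). Summing,
\[ \sum_{i=\min(\Ac)+1}^{\max(\Ac)+T} A_i^{(T)} = kT. \]
Thus the average value of $A_i^{(T)}$ over the active range equals $\mu \coloneqq \frac{kT}{n+T}$.

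\textbf{Step 2 (second moment, using Sidon).} Expanding the square,
\[ \sum_i \bigl(A_i^{(T)}\bigr)^2 = \sum_{a,b\in\Ac}\abs{\{i:a,b\in[i-T,i)\}}. \]
The diagonal $a=b$ contributes $kT$. For $a\neq b$ with $\abs{a-b}=r<T$, the number of windows containing both is $T-r$; for $r\ge T$ it is $0$. Since $\Ac$ is Sidon, each nonzero $r\in\Ac-\Ac$ arises from exactly two ordered pairs, so
\[ \sum_i \bigl(A_i^{(T)}\bigr)^2 = kT + 2\!\!\sum_{\substack{r=1\\r\in\Ac-\Ac}}^{T-1}\!(T-r) = kT + 2\sum_{r=1}^{T-1}(T-r) - 2S(\Ac,T) = T(T+k-1) - 2S(\Ac,T), \]
where the penultimate equality uses $\sum_{r=1}^{T-1}(T-r)=\binom{T}{2}$.

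\textbf{Step 3 (variance identity and solve).} The identity $\sum_i (A_i^{(T)})^2 = \sum_i (A_i^{(T)}-\mu)^2 + (n+T)\mu^2$ together with the definition of $V(\Ac,T)$ gives
\[ T(T+k-1) - 2S(\Ac,T) = V(\Ac,T) + (n+T)\cdot\frac{k^2 T^2}{(n+T)^2} = V(\Ac,T) + \frac{k^2 T^2}{n+T}. \]
Isolating $n+T$ and subtracting $T$ yields the claimed formula for $\diam(\Ac)$.

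The entire argument is essentially mechanical once the objects are unpacked; the only nontrivial ingredient is the Sidon property, invoked in Step 2 to convert the pair count into a sum over $\Ac-\Ac$. The main place to be careful is the bookkeeping of the summation range in Step 1, since $V(\Ac,T)$ is defined as a sum over exactly $n+T$ indices — one must confirm that outside this range $A_i^{(T)}=0$ so that extending or truncating the sums does not introduce spurious terms in the variance identity. No other step poses a genuine obstacle.
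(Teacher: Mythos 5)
Your proof is correct and follows essentially the same double-counting argument the paper uses (explicitly in the proof of Theorem~\ref{thm:thinerdosturan}, and, per its citation, for Theorem~\ref{thm:erdosturan} itself in~\cite{summerpaper}): the paper tallies $\sum_i \binom{A_i^{(T)}}{2}$ rather than $\sum_i \bigl(A_i^{(T)}\bigr)^2$, but these differ only by the first-moment sum $\sum_i A_i^{(T)} = kT$, so the two bookkeepings are interchangeable.
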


In Section~\ref{sec:thin}, we generalize this result to $g$-thin Sidon sets.

The trivial bounds $S(\Ac,T)\ge0,V(\Ac,T)\ge0$, and setting $T=\ceil{k^{3/2}}$ allow one to quickly get $\diam(\Ac)\ge k^2-2k^{3/2}-O(k)$, as is done explicitly in~\cite{summerpaper}; this is not fundamentally different from the 1941 work~\cite{et}.

Using the dataset of Rokicki and Dogon~\cite{dr}, it seems that $S$ is usually much smaller than $V$ for optimal Sidon sets, and that about half of the value of $V$ comes from the first and last $T$ values of $i$, i.e., near the two ends of the Sidon set (see~\cite{summerpaper} for a detailed analysis). In this work, we focus all attention to bounding $V$ near the ends of the Sidon set, and dismiss $S$ by using the trivial bound $S(\Ac,T)\ge0$. Additionally, we will always set $T$ to be some constant times $k^{3/2}$, and the bounds on $V(\Ac, T)$ obtained are of the form $V(\Ac, T)\ge vk^{5/2}-O(k^2)$ for some $v$. With this in mind, we simplify the ETSSE to the following:

\begin{corollary}\label{cor:erdosturan}
Let $\tau>0$ be a constant and suppose for all $k$ and all Sidon sets $\Ac$ with $k$ elements and minimum possible diameter that $V(\Ac, \ceil{\tau k^{3/2}})\ge vk^{5/2}-O(k^2)$. Then
\[ b_\infty\le \tau + \frac{1}{\tau}-\frac{v}{\tau^2}. \]
\end{corollary}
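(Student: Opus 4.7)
The plan is to substitute the hypothesis into the ETSSE formula and extract the leading asymptotics. Fix $k$, let $\Ac=\Ac_k$ be a minimum-diameter Sidon set with $k$ elements, and set $T=\ceil{\tau k^{3/2}}$, so $T=\tau k^{3/2}+O(1)$. Applying Theorem~\ref{thm:erdosturan} to $\Ac$ and using the two inputs $S(\Ac,T)\ge 0$ (trivial) and $V(\Ac,T)\ge vk^{5/2}-O(k^2)$ (hypothesis), the denominator in the ETSSE is at most
\[
T(T+k-1) - vk^{5/2} + O(k^2) = \tau^2 k^3 + (\tau-v)k^{5/2} + O(k^2),
\]
while the numerator is $k^2 T^2 = \tau^2 k^5 + O(k^{7/2})$.

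Next I would do the asymptotic division. Factoring $\tau^2 k^3$ from the denominator yields
\[
\frac{k^2 T^2}{T(T+k-1)-2S-V} \;\ge\; \frac{\tau^2 k^5 + O(k^{7/2})}{\tau^2 k^3\bigl(1+(\tau-v)\tau^{-2}k^{-1/2}+O(k^{-1})\bigr)},
\]
and expanding $1/(1+x)=1-x+O(x^2)$ with $x=(\tau-v)\tau^{-2}k^{-1/2}+O(k^{-1})$ (a quantity of order $k^{-1/2}$, so $x^2=O(k^{-1})$) gives
\[
\frac{k^2 T^2}{T(T+k-1)-2S-V} \;\ge\; k^2 - \frac{\tau-v}{\tau^2}\,k^{3/2} - O(k).
\]
Subtracting $T=\tau k^{3/2}+O(1)$ then yields $\diam(\Ac)\ge k^2-(\tau+\tau^{-1}-v\tau^{-2})k^{3/2}-O(k)$.

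Finally, dividing by $k^{3/2}$ and taking $\limsup_{k\to\infty}$ of $(k^2-\diam(\Ac_k))/k^{3/2}$ gives exactly the claimed bound on $b_\infty$. The only step requiring any care is the asymptotic expansion: one must verify that the $O(k^{7/2})$ slack in the numerator and the $O(k^2)$ slack in the denominator together contribute only $O(k)$ after division, which they do because the leading denominator $\tau^2 k^3$ dominates each of them by a factor of $k^{1/2}$ or more. There is no genuine obstacle; the corollary is essentially a book-keeping exercise extracting the two-term asymptotic of the rational function in Theorem~\ref{thm:erdosturan}.
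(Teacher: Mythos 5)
Your proof is correct and follows exactly the same route as the paper: substitute $T=\ceil{\tau k^{3/2}}$ into the ETSSE, discard $S$ via the trivial bound, use the hypothesis on $V$, and expand the resulting rational expression to two terms before taking the limsup. The only difference is that you spell out the $1/(1+x)$ expansion explicitly where the paper leaves it implicit.
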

\begin{proof}
Let $T=\ceil{\tau k^{3/2}}$; note $T=\tau k^{3/2} + O(1)$. Let $\Ac=\Ac_k$ to be a Sidon set of $k$ elements with minimum possible diameter. From Theorem~\ref{thm:erdosturan}, we have
\begin{align*}
    \diam(\Ac) &\ge \frac{\tau^2 k^5 + O(k^{7/2})}{\tau^2 k^3 + (\tau-v) k^{5/2} + O(k^2)} - \tau k^{3/2} - O(1) \\
    &= k^2 - \left(\frac{1}{\tau} - \frac{v}{\tau^2}\right)k^{3/2} - \tau k^{3/2} - O(k)
\end{align*}
and the result follows by taking the limit as $k\to \infty$.
\end{proof}

The bounds we obtain on $b_\infty$ do not come from considering a single value of $T$. Indeed, for any particular value of $T$, one can construct a sequence of $\Ac$ with $k\to\infty$ so that the best possible value of $v$ in the corollary above is 0. Instead, we come up with multiple bounds on $v$ for different window sizes $T$, so that if a bound on $v$ is poor for one value of $T$, a different $T$ gives a good bound on $v$.

Many results about finite Sidon sets are presented by bounding $R(n)$ rather than $\diam(\Ac)$. The following proposition is a simple calculation and shows how to convert between the two forms:

\begin{proposition}
We have $R(n)\le n^{1/2} + \frac{c}{2}n^{1/4} + O(1)$ if and only if $\diam(\Ac)\ge k^2 - ck^{3/2} - O(k)$ for all Sidon sets $\Ac$ with $\abs{\Ac} = k$. Also, $R(n)\le n^{1/2} + \frac{c}{2}n^{1/4} + o(n^{1/4})$ is equivalent to $\diam(\Ac)\ge k^2 - ck^{3/2} - o(k^{3/2})$.\qed
\end{proposition}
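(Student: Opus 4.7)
The plan is a direct asymptotic inversion based on the duality $R(n)\ge k \iff D(k)<n$, where $D(k)$ denotes the minimum diameter of a Sidon set with $k$ elements: given $R(n)\ge k$, any witness contains a $k$-subset of diameter at most $n-1$; conversely, any $k$-element Sidon set of diameter at most $n-1$ translates into $[0,n)$. Both directions of the first equivalence then reduce to routine Taylor expansions, and the $o(\cdot)$ statement follows by plugging vanishing functions in place of the constants.

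For $(\Leftarrow)$, assume $D(k)\ge k^2-ck^{3/2}-Ck$ for some constant $C$, and set $k=R(n)$, so that $D(k)\le n-1$. Writing $k=n^{1/2}+\tfrac{c}{2}n^{1/4}+z$ and expanding in powers of $n^{-1/4}$, the choice of coefficient $c/2$ is exactly what annihilates the $n^{3/4}$ term, yielding
\[
    k^{2} - ck^{3/2} \;=\; n + \bigl(2z-\tfrac{c^{2}}{2}\bigr)n^{1/2} + O\bigl(\abs{z}\,n^{1/4} + z^{2} + 1\bigr).
\]
Combined with $Ck=O(n^{1/2})$, the inequality $D(k)\le n-1$ forces $2z-\tfrac{c^{2}}{2}\le C+o(1)$, and hence $z=O(1)$.

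For $(\Rightarrow)$, assume $R(n)\le n^{1/2}+\tfrac{c}{2}n^{1/4}+C'$ and let $\Ac$ be a $k$-element Sidon set with $\diam(\Ac)=n-1$, so $k\le R(n)$. Setting $\delta\coloneqq k^{2}-n$ and expanding $n^{1/2}=k-\tfrac{\delta}{2k}+O(\delta^{2}/k^{3})$ and $n^{1/4}=k^{1/2}+O(\delta/k^{3/2})$, the hypothesis rearranges (after multiplying through by $k$) to an inequality of shape $\delta\le ck^{3/2}+O(k)+O(\delta^{2}/k^{2})$. A first pass using only the leading part of the hypothesis yields the crude bound $\delta=O(k^{3/2})$; feeding this back makes $O(\delta^{2}/k^{2})=O(k)$, so $\delta\le ck^{3/2}+O(k)$ as required.

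The main obstacle --- modest as it is --- is this small bootstrap in $(\Rightarrow)$: one cannot absorb $O(\delta^{2}/k^{2})$ into an $O(k)$ error until one already knows $\delta=O(k^{3/2})$. For the $o(\cdot)$ equivalence no new ideas are needed: every error term produced by the Taylor expansions above depends linearly on the constants $C$ and $C'$, so replacing these constants by functions that vanish in the appropriate units (an $o(n^{1/4})$ for $R$, an $o(k^{3/2})$ for $\diam$) propagates cleanly through both directions of the argument.
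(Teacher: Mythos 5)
Your proposal is correct: the paper gives no proof (it dismisses the proposition as ``a simple calculation''), and your argument via the duality $R(n)\ge k \iff D(k)\le n-1$ followed by Taylor inversion is exactly the routine computation intended. The only detail to make explicit is that in the $(\Leftarrow)$ direction you need the same kind of a priori bound you spelled out for $(\Rightarrow)$ --- namely $k\le(1+o(1))n^{1/2}$, hence $z=o(n^{1/2})$, which follows at once from $k^2-ck^{3/2}-Ck\le n$ --- so that the $z^2$ error term can be absorbed before concluding $z=O(1)$.
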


Erd\H{o}s asked if $R(n) = n^{1/2} + o(n^{1/4})$. By the above proposition, this is equivalent to proving that $b_\infty = 0$, which by a slight modification to Corollary~\ref{cor:erdosturan} could be done if one was able to prove
\[ 2S(\Ac, \tau k^{3/2}) + V(\Ac, \tau k^{3/2}) \ge (\tau^3 + \tau)k^{5/2} \]
for some $\tau$.

\section{Using Two Windows}
\label{sec:twowindow}

The goal in this section is to show:

\begin{theorem}\label{thm:twowindow}
Let $\Ac$ be a finite Sidon set with $k$ elements. Then 
\[\diam(\Ac)\ge k^2 - 1.99058k^{3/2} - O(k).\]
\end{theorem}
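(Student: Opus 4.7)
By Corollary~\ref{cor:erdosturan}, to derive the claimed bound it suffices to prove $V(\Ac, \lceil\tau k^{3/2}\rceil)\ge v k^{5/2}-O(k^2)$ for suitable $\tau$ and $v$ with $\tau+1/\tau-v/\tau^2\le 1.99058$. The obstruction is that no single $\tau$ admits a uniform positive $v$: as noted in the introduction, a Sidon set with carefully engineered endpoint structure can make $V$ at that scale vanish to leading order. The two-window strategy is to pick two window sizes $T_1=\lceil\tau_1 k^{3/2}\rceil$ and $T_2=\lceil\tau_2 k^{3/2}\rceil$ with $\tau_1<\tau_2$ and show that no single Sidon set can be pathological for both at once.

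First I would derive a lower bound on $V(\Ac,T)$ from the positions $i$ near the two endpoints of $\Ac$. For $i\in[\min\Ac+1,\min\Ac+T]$ the count $A_i^{(T)}$ is a non-decreasing step function rising from $1$ to $m_L(T):=\abs{\Ac\cap[\min\Ac,\min\Ac+T)}$, with dwell times equal to consecutive gaps of $\Ac$. Expanding the squared deviation of $A_i^{(T)}$ from $\mu=kT/(T+\diam(\Ac))\sim\tau k^{1/2}$ and applying Cauchy--Schwarz together with the Sidon fact that consecutive gaps are distinct positive integers (so that their sum is at least $\binom{m_L(T)}{2}$) should yield an inequality of the form
\[
V(\Ac,T)\ge T\bigl[\Phi(m_L(T),\mu)+\Phi(m_R(T),\mu)\bigr]-O(k^2),
\]
where $m_R(T)$ is defined symmetrically at the right endpoint. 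A direct optimization of $\Phi$ reveals that it is small only when $m\approx 2\mu$ and the gap sequence concentrates most of its mass in a single ``big gap'' of length $\sim T$ sitting between elements $a_\mu$ and $a_{\mu+1}$; every other configuration contributes at order $k^{5/2}$.

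The two-window dichotomy then exploits the Sidon property across scales. The $T_1$-pathological configuration requires a big gap of length $\sim T_1$ at position $\mu_1$, while the $T_2$-pathological configuration requires a big gap of length $\sim T_2$ at position $\mu_2$. Since $\mu_1\ne\mu_2$ and since the sum of all gaps inside $[\min\Ac,\min\Ac+T_2)$ is at most $T_2$, both pathologies cannot occur simultaneously. Making this quantitative, together with the monotonicity $m_L(T_2)\ge m_L(T_1)$ and the universal cap $m_L(T)\le\sqrt{2T}+O(1)$, forces $\max\bigl(V(\Ac,T_1),V(\Ac,T_2)\bigr)\ge v_* k^{5/2}-O(k^2)$ for some positive $v_*$ depending on $\tau_1,\tau_2$. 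Applying Corollary~\ref{cor:erdosturan} to whichever window wins then gives $\diam(\Ac)\ge k^2-\max_j(\tau_j+1/\tau_j-v_j/\tau_j^2)k^{3/2}-O(k)$ for the corresponding $v_j$.

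Finally, I would tune $\tau_1,\tau_2$ numerically to push $\max_j(\tau_j+1/\tau_j-v_j/\tau_j^2)$ down to $1.99058$. The main obstacle is the first step: the Sidon condition constrains the dwell-time sequence only through distinctness of consecutive partial sums, so producing a $\Phi$ that is simultaneously sharp enough to beat $2$ by a nontrivial amount and simple enough to verify by hand is the delicate part. The dichotomy itself, once the correct form of $\Phi$ is in hand, should reduce to a clean finite-dimensional optimization over the endpoint densities.
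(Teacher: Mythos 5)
Your proposal pursues a genuinely different route from the paper, and I think it has a gap that is not merely a matter of missing details.

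The paper never uses the Sidon property to lower-bound $V(\Ac,T)$; in fact it explicitly notes later (before Lemma~\ref{lem:thinbound1}) that the proofs of the $V$-bounds hold for arbitrary sets, with the Sidon hypothesis entering only through the ETSSE. The paper's mechanism introduces two \emph{levels} $\alpha_1<1<\alpha_2$ and cutoff parameters $u_j,v_j$ recording when $A_i^{(T)}$ first crosses $\alpha_j\overline A$. The first bound is an affine function of the $w_j=u_j+v_j$, and its weak case is exactly a quick initial rise (small $u_1$) followed by a long plateau below $\alpha_2\overline A$ (large $u_2$). That plateau is a low-density stretch of width $(u_2-u_1)T$ with at most $(\alpha_2-\alpha_1)\overline A$ elements; choosing $c$ with $\alpha_2-\alpha_1<c<u_2-u_1$ makes the $T'=cT$ windows inside it undercount sharply, giving the second (also affine) bound. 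Both bounds are piecewise affine in $(w_1,w_2)$, and the parameters $\tau,\alpha_1,\alpha_2,c$ are tuned so the pointwise maximum of the two bounds stays below $1.99058$ over the whole $(w_1,w_2)$-square.

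Your version instead tries to bound $V$ via a function $\Phi(m,\mu)$ obtained from distinctness of consecutive gaps. Two concrete problems: first, the characterization ``$\Phi$ is small only when $m\approx 2\mu$ with a gap of length $\sim T$ between $a_\mu$ and $a_{\mu+1}$'' is not right. Take a $k$-element Sidon set whose first $\mu\approx\overline A\sim\tau k^{1/2}$ elements form a near-optimal Sidon set of span $\Theta(\mu^2)=\Theta(k)\ll T$, with the next element beyond $\min(\Ac)+T$. Then $m_L(T)=\mu$ (not $2\mu$), $A_i^{(T)}$ climbs to $\mu$ in $\Theta(k)$ steps and plateaus, and the endpoint contribution is $\Theta(\mu^4)=\Theta(k^2)=o(k^{5/2})$. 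So the putative minimizer is at $m\approx\mu$, not $m\approx2\mu$, and the ``big gap'' lands outside, not inside, the window --- this changes the dichotomy you would have to prove. Second, the cap $m_L(T)\le\sqrt{2T}+O(1)$ lives at scale $k^{3/4}$, far above the relevant scale $\mu\sim k^{1/2}$, so it supplies no useful constraint on the endpoint profile; it cannot drive the $k^{5/2}$ comparison. (The deeper Sidon constraint --- all pairwise differences distinct, not just consecutive gaps --- would be needed to make an argument along these lines, and even then it is unclear how to extract a closed-form $\Phi$ that beats $2$.) The dichotomy intuition (``both windows can't be fooled'') is sound and in the same spirit as the paper, but your route to making it quantitative rests on a $\Phi$ you acknowledge you have not produced and on a characterization of its near-minimizers that is incorrect; without a corrected $\Phi$ and a correct description of its near-minimizers, the argument does not close.
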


The proof will demonstrate most of the main ideas in the later computer-assisted bound $b_\infty\ge 1.96365$.

\begin{proof}
We may assume $\Ac$ is a Sidon set with $k$ elements and minimum possible diameter. Fix a constant $\tau>0$ (to be chosen later) and define $T\coloneqq\ceil{\tau k^{3/2}}$. Also define
\[ \overline{A}\coloneqq\frac{kT}{T+\diam(\Ac)}, \]
and recall the definition $A_i^{(T)} = \abs{\Ac \cap [i-T,i)}$ from the statement of Theorem~\ref{thm:erdosturan}. Note $\overline{A} \sim \tau k^{1/2}$ since $\diam(\Ac)\sim k^2$.

By the ETSSE, if we show that $A_i^{(T)}$ has a large total squared deviation from $\overline{A}$, we obtain a lower bound on $V(\Ac, T)$, which gives us a lower bound on $\diam(\Ac)$. The lower bound on the deviation will come from analyzing the ends of the Sidon set, essentially using the observation that $A_{\min(\Ac)}^{(T)}=A_{\max(\Ac)+T+1}^{(T)}=0\ll \overline{A}$ so $A_i^{(T)}$ deviates greatly from $\overline{A}$ for $i$ near $\min(\Ac)$ and $\max(\Ac)+T$.

The key insight is that if there were only a small deviation between $A_i^{(T)}$ and $\overline{A}$ for $i$ near the ends of $\Ac$, we learn some information about the distribution of elements of $\Ac$: near the ends, there must first be a very high density of elements in order for $\Ac_i^{(T)}$ to increase quickly to get close to $\overline{A}$, followed by a very low-density region to not overshoot $\overline{A}$ too much. This can be exploited by considering a second window size $T'<T$. In particular, there must be a large deviation between $A_i^{(T')}$ and
\[ \overline{A}{}'\coloneqq\frac{kT'}{T'+\diam(\Ac)} \]
for many values of $i$ because $A_i^{(T')}$ will be significantly less than $\overline{A}{}'$ in the low-density region.

Specifically, fix \textit{levels} $\alpha_1$ and $\alpha_2$ with $0<\alpha_1<1<\alpha_2$. Define the following \textit{cutoff points}:
\begin{itemize}
    \item For $j\in\{1,2\}$, let $0\le u_j\le 1$ be such that $\min(\Ac)+u_jT$ is the minimum value of $i\in [\min(\Ac), \min(\Ac)+T]$ such that $A_i^{(T)}\ge \alpha_j\overline{A}$, or $u_j=1$ if there is no such $i$.
    \item For $j\in\{1,2\}$, let $0\le v_j\le 1$ be such that $\max(\Ac)+T+1-v_jT$ is the maximum value of $i\in [\max(\Ac)+1, \max(\Ac)+T+1]$ such that $A_i^{(T)}\ge \alpha_j\overline{A}$, or $v_j=1$ if there is no such $i$.
    \item $w_1\coloneqq u_1+v_1$ and $w_2\coloneqq u_2+v_2$.
\end{itemize}
Now note
\begin{align*}
    V(\Ac, T) &= \sum_{i=\min(\Ac)+1}^{\max(\Ac)+T} \left(A_i^{(T)} - \overline{A}\right)^2 \\
    &\ge \left[\sum_{i=\min(\Ac)+1}^{\min(\Ac)+u_1T}+\sum_{i=\min(\Ac)+u_2T+1}^{\min(\Ac)+T}+\sum_{i=\max(\Ac)+1}^{\max(\Ac)+T-v_2T}+\sum_{i=\max(\Ac)+T+1-v_1T}^{\max(\Ac)+T}\right] \left(A_i^{(T)} - \overline{A}\right)^2 \\
    &\ge (u_1+v_1)T(\alpha_1-1)^2\tau^2k + (2-u_2-v_2)T(\alpha_2-1)^2\tau^2k - O(k^2) \\
    &= (w_1(\alpha_1-1)^2 + (2 - w_2)(\alpha_2-1)^2)\tau^3 k^{5/2} - O(k^2)
\end{align*}
which implies (due to Corollary~\ref{cor:erdosturan})
\begin{equation}
    b_\infty \le \tau + \frac{1}{\tau} - \tau (w_1(\alpha_1-1)^2 + (2 - w_2)(\alpha_2-1)^2).\label{eq:bound1}
\end{equation}

The best bound we can obtain on $\diam(\Ac)$ using \eqref{eq:bound1} is by setting $\tau=1$, and we get $\diam(\Ac)\ge k^2 - 2k^{3/2} + O(k)$ since $u_1,v_1\ge 0$ and $u_2,v_2\le 1$. But if we are near this extreme case of $u_1,v_1,u_2,v_2$, that means that $A_i^{(T)}$ rockets up to $\alpha_1\overline{A}$ quickly and then stays below $\alpha_2\overline{A}$ the next $(u_2-u_1)T$ steps. This means there are at most $(\alpha_2-\alpha_1)\overline{A}$ elements of $\Ac$ in a range of width $(u_2-u_1)T$. By considering a second window size $T'<T$, we will see that $A_i^{(T')}$ is far below $\overline{A}{}'$ for many values of $i$.

Specifically, suppose $T'=\ceil{cT}$ where $0<c<1$. Define $(x)_+\coloneqq\max\{0, x\}$. Suppose $u_2-u_1>c$. Then for $i$ from $\min(\Ac)+u_1T+T'+1$ to $\min(\Ac)+u_2T$, we have $A_i^{(T')}\le (\alpha_2-\alpha_1)\overline{A}$; choosing $c$ carefully, this is less than $\overline{A}{}'$.

Additionally, there are only $\alpha_1\overline{A}$ elements of $\Ac$ from $\min(\Ac)$ to $\min(\Ac)+u_1T$; if $\alpha_1<c$, we have that $A_i^{(T)}$ is less than $\overline{A}{}'$ in this range. Similar analysis holds replacing $u$ with $v$.

Therefore
\begin{align*}
    V(\Ac, T') &= \sum_{i=\min(\Ac)+1}^{\max(\Ac)+T'} \left(A_i^{(T')} - \overline{A}{}'\right)^2 \\
    &\ge \left[\sum_{i=\min(\Ac)+u_1T+1+T'}^{\min(\Ac)+u_2T}+ \sum_{i=\max(\Ac)+T-v_2T+1+T'}^{\max(\Ac)+T-v_1T} \right. \\
    &\qquad\qquad \left.+\sum_{i=\min(\Ac)+1}^{\min(\Ac)+u_1T}+\sum_{i=\max(\Ac)+T+1-v_1T}^{\max(\Ac)+T}\right] \left(A_i^{(T')} - \overline{A}{}'\right)^2 \\
    &\ge ((u_2-u_1-c)_++(v_2-v_1-c)_+)(c-(\alpha_2-\alpha_1))_+^2\tau^3 k^{5/2} \\
    &\qquad\qquad +(u_1+v_1)(c-\alpha_1)_+^2\tau^3k^{5/2} - O(k^2) \\
    &\ge ((w_2-w_1-2c)(c-(\alpha_2-\alpha_1))_+^2+w_1(c-\alpha_1)_+^2)\tau^3 k^{5/2} - O(k^2)
\end{align*}
so
\begin{equation}
    b_\infty \le c\tau + \frac{1}{c\tau} - \frac{\tau}{c^2}((w_2-w_1-2c)(c-(\alpha_2-\alpha_1))_+^2+w_1(c-\alpha_1)_+^2).\label{eq:bound2}
\end{equation}

It remains to choose parameters $\tau, \alpha_1, \alpha_2,$ and $c$, and then show that for any choice of $0\le w_1\le w_2\le 2$, either \eqref{eq:bound1} or \eqref{eq:bound2} gives $\diam(\Ac)\ge k^2 - 1.99058k^{3/2} + O(k)$. We choose the following parameters:
\begin{align*}
    \tau &= 1.07950, \\
    \alpha_1 &= 0.72720, \\
    \alpha_2 &= 1.31609, \\
    c &= 0.86838.
\end{align*}
Note $c>\alpha_2-\alpha_1$ and $c>\alpha_1$. Inequality \eqref{eq:bound1} implies
\[ b_\infty\le 1.7901428 - 0.0803363w_1 + 0.1078559w_2, \]
while inequality \eqref{eq:bound2} implies
\[ b_\infty\le 3.3009719 + 0.7181409w_1 - 0.7466741w_2. \]
(Note that for each of the six coefficients appearing in these two bounds, we have rounded them in the direction that makes the resulting bound weaker, so the loss of precision by truncating to seven digits causes no issue.) 

If
\[ 1.5108291+0.7984772w_1-0.8545300w_2\ge 0 \]
then the first bound is at most $1.99058$, while if the reverse inequality holds, the second bound is at most $1.99058$, completing the proof.
\end{proof}

Here, the parameters $\tau,\alpha_1,\alpha_2,c$ are near a local extremum; nudging any of them by $10^{-5}$ results in a worse bound on $b_\infty$.

\section{More Parameters}
\label{sec:nwindow}

We can improve the bound obtained in the previous section by introducing more parameters; specifically, we will introduce more levels $\alpha_i$ and more windows $T_j'$. Notice that at the end of the proof in the previous section, we obtained two bounds on $b_\infty$, each affine functions in two variables. Each new level $\alpha_i$ introduced will increase the number of variables in each bound by one, and each window $T_j'$ introduced will increase the number of bounds on $b_\infty$ by one. Some complications will arise due to the fact that some of the bounds we derive in this section will only be piecewise affine; this is one of the main reasons we require extensive computer assistance to verify our main result.

\subsection{Generalizing the Bounds}

It will first be convenient to ``symmetrize'' $\Ac$. Define
\[ B_i^{(T)}\coloneqq\frac{A_{\min(\Ac)+i}^{(T)}+A_{\max(\Ac)+T+1-i}^{(T)}}{2}. \]
In other words, let $m=\frac{\min(\Ac)+\max(\Ac)}{2}$ be the midpoint of $\Ac$; then $B_i^{(T)}$ counts the number of elements of $\Ac$ in $[i-T,i)$ plus the number in the reflection of $[i-T,i)$ about $m$ (which is $(2m-i,2-i+T]=[\min(\Ac)+\max(\Ac)+1-i,\min(\Ac)+\max(\Ac)+T+1-i)$), all divided by 2, effectively ``averaging'' the two ends of $\Ac$.

Fix levels $0<\alpha_1<\alpha_2<\dots<\alpha_K$ for some $K$. Also fix $\tau>0$ and let $T\coloneqq\ceil{\tau k^{3/2}}$ and $\overline{A}\coloneqq\frac{kT}{T+\diam(\Ac)}$ as before. For $1\le j\le K$, let $0\le w_j\le 1$ be such that $\min(\Ac)+w_jT$ is the minimum value of $i\in [\min(\Ac), \min(\Ac)+T]$ such that $B_i^{(T)}\ge \alpha_j\overline{A}$, or $w_j=1$ if there is no such $i$. Also define $w_0=\alpha_0\coloneqq0$, $w_{K+1}\coloneqq1$, and $\alpha_{K+1}\coloneqq\infty$. We sometimes write $\alpha=(\alpha_0, \alpha_1, \dots, \alpha_{K+1})$ and $w=(w_0, w_1, \dots, w_{K+1})$. See Figure~\ref{fig:explanation} for a visual representation of the relationship between $\alpha$ and $w$.

\begin{figure}[htb!]
\begin{center}
\begin{tikzpicture}
\draw[thick, ->] (-1, 0) -- (11, 0) node[right] {$i$};
\draw[thick, ->] (0, -1) -- (0, 8) node[above] {$B_i^{(T)}$};
\node[below right] at (0, 0) {$0=w_0T$};
\node[below] at (3, 0) {$w_1T$};
\node[below] at (5, 0) {$w_2T$};
\node[below] at (8, 0) {$w_3T$};
\node[below] at (10, 0) {$T=w_4T$};

\node[above left] at (0, 0) {$0=\alpha_0\overline{A}$};
\node[left] at (0,2) {$\alpha_1\overline{A}$};
\node[left] at (0,3) {$\alpha_2\overline{A}$};
\node[left] at (0,4.5) {$\overline{A}$};
\node[left] at (0,6) {$\alpha_3\overline{A}$};
\node[left] at (0,8) {$\alpha_4\overline{A}=\infty$};

\draw[dashed] (0,2) -- (3,2) -- (3,0);
\draw[dashed] (0,3) -- (5,3) -- (5,0);
\draw[red, dashed] (0,4.5) -- (11,4.5);
\draw[dashed] (0,6) -- (8,6) -- (8,0);
\draw[dashed] (10,7) -- (10,0);

\draw (0,0) to[out=70, in=200] (3,2) to[out=20, in=250] (5,3) to[out=70, in=200] (8,6) to[out=20, in=180] (10,6.5);
\draw[decorate,decoration={snake,amplitude=1.5,segment length=10, post length=0, pre length=0}] (10, 6.5) -- (11, 5.5);
\end{tikzpicture}
\end{center}
\caption{A visual guide to the parameters with $K=3$ assuming $\min(\Ac)=0$.}
\label{fig:explanation}
\end{figure}
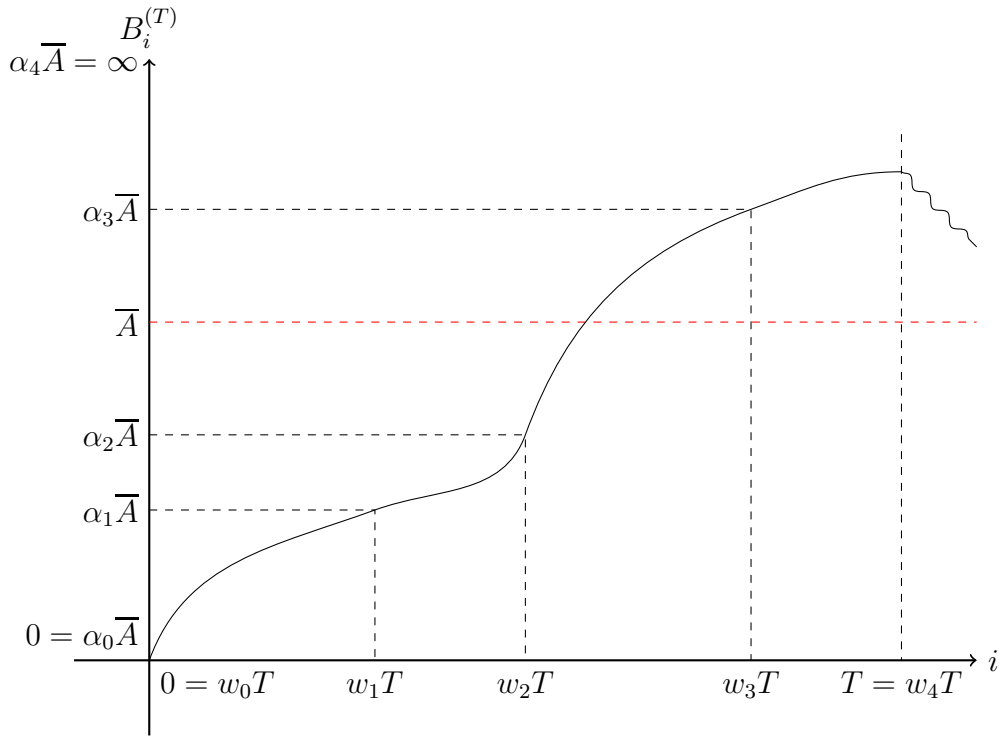

Notice a slight difference in how the cutoffs $w$ are scaled compared to the proof of Theorem~\ref{thm:twowindow}: here, $w_j$ is between 0 and 1 (inclusive), while in that proof, each $u_j$ and $v_j$ was between 0 and 1 and each $w_j$ was between 0 and 2.

Here is the generalization of the first bound appearing in the proof of Theorem~\ref{thm:twowindow}:

\begin{lemma}\label{lem:bound1}
Let $\Ac$ be a finite Sidon set with $k$ elements. With the $\tau$, $\alpha$, and $w$ defined above, we have $\diam(\Ac)\ge k^2 - bk^{3/2}-O(k)$ where
\[ b\le \tau+\frac{1}{\tau}-2\tau\sum_{j=0}^{K}(w_{j+1}-w_j)\min_{\alpha_j\le z \le \alpha_{j+1}}(z-1)^2. \]
\end{lemma}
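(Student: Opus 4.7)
The plan is to generalize the proof of Theorem~\ref{thm:twowindow} from two levels to $K$ levels, working with the symmetrized function $B_i^{(T)}$ in place of $A_i^{(T)}$. The ultimate target is a lower bound of the form
\[ V(\Ac, T) \;\ge\; 2\tau^3 \left(\sum_{j=0}^{K}(w_{j+1}-w_j)\min_{\alpha_j\le z \le \alpha_{j+1}}(z-1)^2\right) k^{5/2} - O(k^2), \]
at which point the same ETSSE calculation that proves Corollary~\ref{cor:erdosturan} yields the claimed bound on $b$.

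First I would verify that $B_i^{(T)}$ is non-decreasing in $i$ for $i\in [0,T]$. The key observation is that $A_{\min(\Ac)+i}^{(T)}$ is non-decreasing on this range: as $i$ increments, the window $[i-T,i)$ shifts right, possibly gaining one element of $\Ac$ on the right while losing nothing on the left (the left endpoint remains at or below $\min(\Ac)$). The mirrored count $A_{\max(\Ac)+T+1-i}^{(T)}$ is non-decreasing in $i$ by the symmetric argument. Monotonicity of $B$ is exactly what makes the thresholds $w_jT$ well-behaved: the subinterval $[w_jT,w_{j+1}T)$ is the set of $i$ on which $\alpha_j\overline{A}\le B_i^{(T)} < \alpha_{j+1}\overline{A}$.

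Next I would lower-bound $V(\Ac,T)$ by a sum over the symmetrized function. For $k$ large the two end regions $[\min(\Ac)+1,\min(\Ac)+T]$ and $[\max(\Ac)+1,\max(\Ac)+T]$ are disjoint, so restricting the full sum in $V$ to these two regions (discarding the nonnegative middle), pairing each index $\min(\Ac)+i$ with its mirror $\max(\Ac)+T+1-i$, and applying $x^2+y^2\ge 2((x+y)/2)^2$ gives
\[ V(\Ac,T) \;\ge\; 2\sum_{i=1}^{T}\bigl(B_i^{(T)}-\overline{A}\bigr)^2. \]
On each subinterval $[w_jT,w_{j+1}T)$ the elementary bound $(B_i^{(T)}-\overline{A})^2\ge \overline{A}^2\min_{\alpha_j\le z\le \alpha_{j+1}}(z-1)^2$ holds by the very definition of the $w_j$'s, and summing with $\overline{A}^2 T = \tau^3 k^{5/2}+O(k^2)$ gives the target estimate for $V$. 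Feeding this into ETSSE (using $S\ge 0$, exactly as in the proof of Corollary~\ref{cor:erdosturan}) finishes the argument.

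The main obstacle will be the careful verification of monotonicity together with the associated bookkeeping: handling boundary or degenerate cases (some $w_j$'s equal to $0$ or $1$, or consecutive thresholds coinciding), accounting for the off-by-one in the subinterval counts $\lfloor w_{j+1}T\rfloor-\lceil w_jT\rceil$, and justifying $\overline{A}\sim \tau k^{1/2}$ via the trivial Erd\H{o}s--Tur\'an estimate $\diam(\Ac)\sim k^2$. Once these points are settled, the proof is a clean extension of the two-window argument used for Theorem~\ref{thm:twowindow}.
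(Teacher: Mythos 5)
Your proposal is correct and follows essentially the same route as the paper: symmetrize to $B_i^{(T)}$ via $x^2+y^2\ge 2((x+y)/2)^2$, discard the middle of the sum, split $[0,T]$ into the intervals cut out by the $w_j$, and on each interval bound $(B_i^{(T)}-\overline{A})^2$ from below by $\overline{A}^2\min_{\alpha_j\le z\le\alpha_{j+1}}(z-1)^2$ up to $O(1)$, then feed the resulting $V(\Ac,T)\ge 2\tau^3\bigl(\sum_j (w_{j+1}-w_j)\min_{[\alpha_j,\alpha_{j+1}]}(z-1)^2\bigr)k^{5/2}-O(k^2)$ into Corollary~\ref{cor:erdosturan}. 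The one genuine addition in your write-up is the explicit verification that $B_i^{(T)}$ is non-decreasing on $[0,T]$ (both summands can only gain, never lose, elements as the right endpoint stays below $\min(\Ac)+T$ and the mirror's right endpoint stays above $\max(\Ac)$); this is implicitly used in the paper's proof---specifically in the case $1<\alpha_j$, where one needs $B_i^{(T)}\ge\alpha_j\overline{A}-O(1)$ for all $i>w_jT$ and not merely at the first crossing---so making it explicit is a worthwhile tightening rather than a different approach.
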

\begin{proof}
Define
\[ W_j\coloneqq[\min(\Ac)+w_jT+1,\min(\Ac)+w_{j+1}T]. \]
for $0\le j\le K$.

Unless $\alpha_j\le 1\le \alpha_{j+1}$, the value of $B_i^{(T)}$ is bounded away from $\overline{A}$ for $i\in W_j$. Indeed, if $\alpha_j<\alpha_{j+1}<1$, then $\abs{B_i^{(T)}-\overline{A}}\ge (1-\alpha_{j+1})\overline{A}-O(1)$ for $i\in W_j$, and if $1<\alpha_j<\alpha_{j+1}$, then $\abs{B_i^{(T)}-\overline{A}}\ge (\alpha_j-1)\overline{A}-O(1)$ for $i\in W_j$. Put another way, we have
\[ \abs{B_i^{(T)}-\overline{A}}\ge \overline{A}\min_{\alpha_j\le z\le \alpha_{j+1}}\abs{z-1}-O(1). \]
whenever $i\in W_j$. Thus
\begin{align*}
    V(\Ac, T) &= \sum_{i=\min(\Ac)+1}^{\max(\Ac)+T} \left(A_i^{(T)} - \overline{A}\right)^2 \\
    &\ge \left[\sum_{i=\min(\Ac)+1}^{\min(\Ac)+T}+\sum_{i=\max(\Ac)+1}^{\max(\Ac)+T}\right] \left(A_i^{(T)} - \overline{A}\right)^2 \\
    &\ge \sum_{i=\min(\Ac)+1}^{\min(\Ac)+T} 2\left(B_i^{(T)} - \overline{A}\right)^2
\end{align*}
since $(x-z)^2+(y-z)^2\ge 2\left(\frac{x+y}{2}-z\right)^2$ for all $x,y,z$. Continuing,
\begin{align*}
    V(\Ac, T) &\ge 2\sum_{j=0}^K\sum_{i\in W_j} \left(B_i^{(T)} - \overline{A}\right)^2 \\
    &\ge 2\sum_{j=0}^K\abs{W_j}\tau^2 k\min_{\alpha_j\le z\le \alpha_{j+1}}(z-1)^2 - O(k^2) \\
    &= 2\sum_{j=0}^K(w_{j+1}-w_j)\tau^3 k^{5/2}\min_{\alpha_j\le z\le \alpha_{j+1}}(z-1)^2 - O(k^2).
\end{align*}
This gives the desired bound after applying Corollary~\ref{cor:erdosturan}.
\end{proof}
Notice that if $K=2$ and $\alpha_1<1<\alpha_2$, the bound in this lemma exactly matches the first bound obtained in the proof of Theorem~\ref{thm:twowindow}. Now let us generalize the second bound.

Fix $c>0$ and define $T'\coloneqq\ceil{cT}$ and $\overline{A}{}'\coloneqq\frac{kT'}{T'+\diam(\Ac)} = c\overline{A}+O(1)$ as before. Consider some $i\in[\min(\Ac)+1, \min(\Ac)+T]$, and let us attempt to bound $B_i^{(T')}$ away from $\overline{A}{}'$. For each $i$, let $x_i$ be the smallest value of $j$ so that that $\min(\Ac)+w_jT+1\ge i-T'$ and let $y_i$ be the largest value of $j$ so that $\min(\Ac)+w_jT < i$. This means that the entire window $[i-T', i)$ lies inside $I=W_{x_i-1}\cup W_{x_i+1}\cup\dots \cup W_{y_i}$, adopting the definition of $W_j$ from the proof of Lemma~\ref{lem:bound1}. But we know there are only at most $2(\alpha_{y_i}-\alpha_{x_i-1})\overline{A}$ elements of $\Ac$ in $I$ plus the mirror of $I$ about the midpoint of $\Ac$, so
\[ B_i^{(T')}\le (\alpha_{y_i}-\alpha_{x_i-1})\overline{A} + O(1) \]
so
\begin{equation}\label{eq:pre1bound2} \abs{B_i^{(T')} - \overline{A}{}'} \ge (c-(\alpha_{y_i}-\alpha_{x_i-1}))_+\tau k^{1/2} - O(1), \end{equation}
here using the notation $(x)_+=\max\{0,x\}$ from the proof of Theorem~\ref{thm:twowindow}.

At the same time, the window $[i-T', i)$ contains $W_{x_i}\cup\dots\cup W_{y_i-1}$, so there are at least $(\alpha_{y_i-1}-\alpha_{x_i})\overline{A}$ elements of $\Ac$ in this range plus its mirror, so we also have
\begin{equation}\label{eq:pre2bound2} \abs{B_i^{(T')} - \overline{A}{}'} \ge ((\alpha_{y_i-1}-\alpha_{x_i})-c)_+\tau k^{1/2} - O(1). \end{equation}

To make sense of \eqref{eq:pre1bound2} and \eqref{eq:pre2bound2} in the case $x_i$ or $y_i$ is zero, we should additionally define $\alpha_j=0$ if $j<0$. Note $\alpha_{y_i}-\alpha_{x_i-1}\ge \alpha_{y_i-1}-\alpha_{x_i}$, so only one of $(c-(\alpha_{y_i}-\alpha_{x_i-1}))_+$ and $((\alpha_{y_i-1}-\alpha_{x_i})-c)_+$ can be nonzero. Inequalities \eqref{eq:pre1bound2} and \eqref{eq:pre2bound2} can be succinctly combined to
\begin{equation}\label{eq:prebound2}
    \abs{B_i^{(T')} - \overline{A}{}'} \ge \tau k^{1/2}\min_{\alpha_{y_i-1}-\alpha_{x_i}\le z\le \alpha_{y_i}-\alpha_{x_i-1}}\abs{z-c} - O(1)
\end{equation}

Of course, the value of $x_i$ is the same as $x_{i+1}$ everywhere except at the $i$ when $i-T'$ passes $\min(\Ac)+w_jT$ for some $j$; define $q_0<q_1<\dots<q_G$ such that $x_{i+1}$ and $x_i$ are different when $i=\min(\Ac)+q_jT$ for some $j$. Similarly, $y_i$ only changes when $i$ passes $\min(\Ac)+w_jT$ for some $j$; let $r_0<r_1<\dots<r_H$ be such that $y_{i+1}$ and $y_i$ are different when $i=\min(\Ac)+r_jT$ for some $j$.

Now define
\[ P \coloneqq \{x\in \{q_j\}_{j=0}^G \cup \{r_j\}_{j=0}^H\mid 0\le x\le 1\}. \]
Order the elements of $P$ as $p_0<p_1<\dots<p_L$, so each $p_j$ either corresponds to a $q_{j'}$ or an $r_{j'}$. It is the case, due to the definitions of $w_0$ and $w_{K+1}$, that $p_0=r_0=0$ and $p_L=r_H=1$.

For each $1\le j \le L$, define $\zeta_j$ and $\eta_j$ so that for $i\in [\min(\Ac)+1+p_{j-1}T,\min(\Ac)+p_jT]$, $\zeta_j=x_i$ and $\eta_j=y_i$. We write $q,r,p,\zeta,\eta$ to mean $(q_0,\dots,q_G)$, $(r_0,\dots,r_H)$, $(p_0,\dots,p_L)$, $(\zeta_1,\dots,\zeta_L)$, and $(\eta_1,\dots,\eta_L)$, respectively. Here is the generalization of the second bound:

\begin{lemma}\label{lem:bound2}
Let $\Ac$ be a finite Sidon set with $k$ elements. With the previous definitions of $\tau,c,\alpha,p,\zeta,\eta$, we have $\diam(\Ac)\ge k^2-bk^{3/2}-O(k)$ where
\[
    b\le c\tau + \frac{1}{c\tau} -2\frac{\tau}{c^2}\sum_{j=1}^{L}(p_j-p_{j-1})\min_{\alpha_{\eta_j-1}-\alpha_{\zeta_j}\le z\le \alpha_{\eta_j}-\alpha_{\zeta_j-1}}(z-c)^2.
\]
\end{lemma}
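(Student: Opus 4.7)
The plan is to follow the structure of the proof of Lemma~\ref{lem:bound1}, but at window $T'$ rather than $T$, using the sharper pointwise bound \eqref{eq:prebound2} in place of the direct level-based estimate.

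First, I apply the ETSSE (Theorem~\ref{thm:erdosturan}) at window size $T'$, restrict the resulting sum to the first and last $T$ values of $i$ (dropping the middle terms only weakens the bound, which is acceptable), and then invoke the symmetrization inequality $(x-z)^2 + (y-z)^2 \ge 2((x+y)/2-z)^2$ exactly as in Lemma~\ref{lem:bound1} to obtain
\[ V(\Ac, T') \ge 2 \sum_{i=\min(\Ac)+1}^{\min(\Ac)+T} \left(B_i^{(T')} - \overline{A}{}'\right)^2 - O(k^2). \]

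Second, I partition $[\min(\Ac)+1,\min(\Ac)+T]$ into the $L$ intervals $I_j = [\min(\Ac)+p_{j-1}T+1,\min(\Ac)+p_jT]$. By the very definition of $p,\zeta,\eta$, the pair $(x_i,y_i)$ is constantly equal to $(\zeta_j,\eta_j)$ on $I_j$, so inequality \eqref{eq:prebound2} applies uniformly on $I_j$:
\[ \abs{B_i^{(T')} - \overline{A}{}'} \ge \tau k^{1/2} \min_{\alpha_{\eta_j-1}-\alpha_{\zeta_j}\le z\le \alpha_{\eta_j}-\alpha_{\zeta_j-1}} \abs{z-c} - O(1). \]
Since $\abs{I_j} = (p_j-p_{j-1})T + O(1)$ and $T = \tau k^{3/2} + O(1)$, squaring and summing over $j$ yields
\[ V(\Ac, T') \ge 2\tau^3 k^{5/2} \sum_{j=1}^L (p_j-p_{j-1}) \min_{\alpha_{\eta_j-1}-\alpha_{\zeta_j}\le z\le \alpha_{\eta_j}-\alpha_{\zeta_j-1}} (z-c)^2 - O(k^2). \]

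Third, since $T' = \ceil{c\tau k^{3/2}}$, I apply Corollary~\ref{cor:erdosturan} with $c\tau$ playing the role of $\tau$ and $v = 2\tau^3 \sum_j (p_j-p_{j-1})\min (z-c)^2$; the resulting bound $b_\infty \le c\tau + 1/(c\tau) - v/(c\tau)^2$ simplifies to the statement of the lemma because $v/(c\tau)^2 = (2\tau/c^2)\sum_j (p_j-p_{j-1})\min (z-c)^2$. The only technical care needed is checking that the $O(1)$ slack per term in \eqref{eq:prebound2}, together with the $O(1)$ slack from $T' = cT+O(1)$ and the symmetrization step, aggregates to at most $O(k^2)$ after squaring and summing $O(k^{3/2})$ terms; this is routine from $(\tau k^{1/2}+O(1))^2 = \tau^2 k + O(k^{1/2})$. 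Conceptually the only novelty compared to Lemma~\ref{lem:bound1} is that the natural cells on which the pointwise deviation bound takes a fixed value are indexed not by the $w_j$ alone, but by the merged, clipped sequence $p = (\{w_j\}\cup\{w_j+c\})\cap[0,1]$, reflecting how the shorter window $[i-T',i)$ can straddle multiple level-crossings of the longer window.
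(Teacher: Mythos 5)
Your proposal is correct and follows essentially the same route as the paper's proof: restrict $V(\Ac,T')$ to the two ends and symmetrize to $B_i^{(T')}$, split $[\min(\Ac)+1,\min(\Ac)+T]$ along the points $p_jT$ where $(x_i,y_i)=(\zeta_j,\eta_j)$ is constant, apply \eqref{eq:prebound2} on each piece, and finish with Corollary~\ref{cor:erdosturan} with $c\tau$ in place of $\tau$. Your explicit bookkeeping of the $O(1)$ and $O(k^2)$ error terms is slightly more detailed than the paper's but changes nothing of substance.
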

\begin{proof}
We have
\begin{align*}
    V(\Ac, T') &\ge \sum_{i=\min(\Ac)+1}^{\min(\Ac)+T} 2\left(B_i^{(T')} - \overline{A}{}'\right)^2 \\
    &= 2\sum_{j=1}^{L}\sum_{i=\min(\Ac)+1+p_{j-1}T}^{\min(\Ac)+p_jT}\left(B_i^{(T')} - \overline{A}{}'\right)^2 \\
    &\ge 2\sum_{j=1}^{L} (p_j-p_{j-1})\tau^3 k^{5/2}\min_{\alpha_{\eta_j-1}-\alpha_{\zeta_j}\le z\le \alpha_{\eta_j}-\alpha_{\zeta_j-1}}(z-c)^2 - O(k^2),
\end{align*}
in the last line using \eqref{eq:prebound2} to bound $\left(B_i^{(T')} - \overline{A}{}'\right)^2$, and using the definitions of $p,\zeta,\eta$ to replace $x_i$ and $y_i$ with $\zeta_j$ and $\eta_j$. The result then follows from Corollary~\ref{cor:erdosturan}.
\end{proof}

Notice that we can actually employ more than two window sizes, simply using multiple instances of Lemma~\ref{lem:bound2} with different values of $c$ (and the appropriate resulting $p,\zeta,\eta$, which depend on $c$), and this will improve the final bound on $b_\infty$. For example, using three windows of sizes $T=\ceil{\tau k^{3/2}}$, $T_1'=\ceil{c_1T}$, and $T_2'=\ceil{c_2T}$, for any values of $w$, we get three bounds on $b$: one from Lemma~\ref{lem:bound1} and two from Lemma~\ref{lem:bound2}, using $c=c_1$ and $c=c_2$.

\subsection{Combining the Bounds}

Combining the bounds from Lemma~\ref{lem:bound1} and Lemma~\ref{lem:bound2} is not so straightforward compared to the proof of Theorem~\ref{thm:twowindow}. The complication arises mostly from the failure of the second bound to be affine in $w$; indeed, the second bound is actually only piecewise affine in the $w$, with the cutoffs between the pieces occurring when some $q_j$ equals some $r_{j'}$; this in turn changes the values of $\zeta$ and $\eta$. We call the domains of the pieces making up the second bound the \textit{cells}, and the affine function on each cell given by the second bound the \textit{cell function}. We will describe how to determine the boundary and cell function of each cell; then, splitting the analysis into a different cases based on the cells will lead to the final bound on $b_\infty$, though there are far too many cases to check by hand. However, the cases can be enumerated programmatically, and each case can be efficiently analyzed using linear programming.

\subsubsection{Determining Cell Boundaries and Cell Functions}

Notice that the value of $q_j$ is precisely $w_j+c$, since $x_i$ changes value when $i-T'$ is equal to $\min(\Ac)+w_jT$; i.e. when $i$ is equal to $\min(\Ac)+(w_j+c)T=:\min(\Ac)+q_jT$. Likewise, $r_j$ is actually equal to $w_j$, and in fact $H=K+1$ and $r_H=w_{K+1}=1$. Thus, some $q_j$ equals some $r_{j'}$ precisely when some $w_j+c$ equals some $w_{j'}$. This is an affine constraint on $w$, so the cells are convex polytopes.

Let $S$ be the set of the ways to ``interlace'' $q$ and $r$; specifically, each element of $S$ is a tuple $s=(s_0,\dots,s_{K+1})$ such that for each $0\le j\le K+1$, $r_{s_j}\le q_j$, and $s_j$ is chosen to be the maximum value that this is true. Each element of $S$ potentially corresponds to a cell, though the resulting inequality constraints may have no solution, so some elements of $S$ correspond to empty cells and may later be discarded.

We will illustrate how to find the bounding inequalities for a particular cell as a representative example. For example, if $K=3$ and $s=(0,2,2,4,4)$ (so $s_0=0$, $s_1=2$, and so on), then the corresponding cell has 
\begin{align*}
    p_0 &= r_0 = w_0 = 0 \\
    p_1 &= q_0 = w_0 + c = c \\
    p_2 &= r_1 = w_1 \\
    p_3 &= r_2 = w_2 \\
    p_4 &= q_1 = w_1+c \\
    p_5 &= q_2 = w_2+c \\
    p_6 &= r_3 = w_3 \\
    p_7 &= r_4 = w_4 = 1.
\end{align*}
Here we have determined $L=7$, but the specific value of $L$ depends on $s$. For example, if $s_3$ was $3$ instead of $4$, then $L$ would be $8$, with $p_7=w_3+c$ and $p_8=w_4$ instead. Notice in the case illustrated above that $q_3=w_3+c$ and $q_4=w_4+c$ do not appear in the elements of $P$ since they are at least $r_4=1$, and $P$ contains only elements between 0 and 1.

We need $p_0\le p_1\le \dots \le p_L$, so in this case we have constraints
\begin{align*}
    w_1 &\ge c & \text{from $p_2\ge p_1$}, \\
    w_1 + c &\ge w_2 & \text{from $p_4\ge p_3$}, \\
    w_3 &\ge w_2+c & \text{from $p_6\ge p_5$},
\end{align*}
as well as $0=w_0\le w_1\le \dots \le w_{K+1}=1$, which are always constraints regardless of $s$. Additionally, we have the constraint $w_3+c\ge w_4$ since $s_3=4$. This completes the determination of the cell boundary.

Not all possible values of $s$ correspond to nonempty cells. For example, if $s=(0,0,\dots)$, then we would have to have $q_1=w_1+c\le r_1=w_1$, which is not possible since we will take $c>0$. Generalizing this, we must have $s_j\ge j$ for all $j$ in order for the corresponding cell to be nonempty. The values of $s_j$ must also obviously be weakly increasing in $j$ for the cell to be nonempty. There are even more constraints on $s$ depending on the specific value of $c$; for example, if $c>1/2$, then it is not possible for $q_0=c\le r_1=w_1\le q_1=w_1+c\le r_{K+1}=1$, since here we have $q_1\ge 2c>1$. We can quickly check if a particular $s$ corresponds to a nonempty cell for a given value of $c$ by testing the feasibility of the linear program with the relevant set of inequalities (and an arbitrary objective function).

Given $s$, we can also determine the cell function for the corresponding cell. To do this, we first determine the values of $\zeta$ and $\eta$. We have that $\zeta_j$ is equal to the smallest $j'$ such that $q_{j'}\ge p_j$, and $\eta_j$ is equal to the smallest $j'$ such that $r_{j'}\ge p_j$. Continuing the example cell from before, we have $\zeta=(0,1,1,1,2,3,3)$ and $\eta=(1,1,2,3,3,3,4)$; here $\zeta$ and $\eta$ are 1-indexed. Having determined $\zeta$ and $\eta$, given $\tau$, $c$, and $\alpha_j$, it is easy to rewrite the second bound in the form
\[ a_1^{(s)}w_1 + \dots + a_K^{(s)}w_K + a_{K+1}^{(s)} \]
for some real numbers $a_j^{(s)}$. Thus we can determine the boundary and cell functions for each cell and write the bound from Lemma~\ref{lem:bound2} as a piecewise affine function.

\subsubsection{Optimizing Over Multiple Piecewise Affine Functions}

Now to combine the bounds from Lemma~\ref{lem:bound1} and Lemma~\ref{lem:bound2}, break into two cases for each cell. In the first case, assume that the bound from Lemma~\ref{lem:bound1} is greater than Lemma~\ref{lem:bound2}. Then we are searching for the smallest value of the first bound among the $w$ lying in the cell that make the first bound greater than the second. This is easily written as a linear programming instance: the objective function is the first bound, and the constraints are the constraints imposed by the cell boundary, plus the constraint that the first bound is at least the second; the objective function and all constraints are affine functions.

The second case in this cell is similar, but instead assuming that the second bound is greater than the first, so the objective function is the second bound and the constraints are the cell boundary plus thte constraint that the second bound is greater than the first; again, the objective function and all constraints are affine functions. It may be the case in some cells that either the first bound or second bound is always the larger one, in which case the linear program resulting from the other case will be infeasible, and that case can be discarded. The resulting bound on $b_\infty$ is the maximum over all cases of the solution to the linear program corresponding to that case.

If we employ more than two windows, we are to combine the bound from Lemma~\ref{lem:bound1} with several bounds from Lemma~\ref{lem:bound2}. Each instance of Lemma~\ref{lem:bound2} comes with its own set of cells. Now, we split into even more cases: one case for each bound being the largest over each nonempty intersection formed from choosing one cell from each instance of Lemma~\ref{lem:bound2}. The objective function in a particular case is one of the bounds, and the constraints are the constraints from the cell boundaries of each cell included in the intersection, plus constraints to ensure that the chosen objective function is at least as large as all of the other bounds.

Putting it all together, we have:

\begin{theorem}
If $\Ac$ is a Sidon set with $k$ elements, then $\diam(\Ac)\le k^2 - 1.96365k^{3/2} - O(k)$.
\end{theorem}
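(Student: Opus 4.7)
The plan is to apply the machinery of Section~\ref{sec:nwindow} with carefully chosen parameters and then verify the resulting finitely-many linear programs. Concretely, I would fix a base window ratio $\tau>0$, a list of levels $0<\alpha_1<\alpha_2<\dots<\alpha_K$, and a list of secondary window ratios $0<c_1,c_2,\dots,c_M<1$. Together these give one affine upper bound on $b_\infty$ from Lemma~\ref{lem:bound1}, plus $M$ piecewise-affine upper bounds from Lemma~\ref{lem:bound2}, one for each $c_m$. Since the actual $w=(w_1,\dots,w_K)$ determined by $\Ac$ is unknown, the bound we prove on $b_\infty$ is the worst case over admissible $w\in[0,1]^K$ (with $w_1\le \dots \le w_K$) of the \emph{minimum} over those $M+1$ bounds. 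So the problem reduces to numerically certifying
\[
\max_{w}\ \min_{0\le m\le M}\ \mathrm{Bound}_m(w)\ \le\ 1.96365.
\]

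To handle the piecewise structure, for each $c_m$ I would enumerate all interlacing tuples $s$ as described in Section~\ref{sec:nwindow}: each $s$ fixes the ordering of the points $\{w_j\}\cup\{w_j+c_m\}$ in $[0,1]$, hence fixes $\zeta$, $\eta$, and the sequence $p$, and thus yields a convex polytopal cell $C_m(s)\subseteq [0,1]^K$ together with an affine cell function $\mathrm{Bound}_m^{(s)}(w)$ equal to $\mathrm{Bound}_m(w)$ on $C_m(s)$. I would discard via an LP feasibility check those $s$ for which $C_m(s)$ is empty. Then for every tuple $(s^{(1)},\dots,s^{(M)})$ of surviving cells and every choice of $m^\star\in\{0,1,\dots,M\}$ (indicating which of the $M+1$ bounds attains the minimum at the worst $w$), I would solve the linear program whose feasible region is
\[
\bigcap_{m=1}^{M} C_m(s^{(m)})\ \cap\ \bigl\{w:\ \mathrm{Bound}_{m^\star}(w)\le \mathrm{Bound}_m(w)\ \text{for all } m\bigr\}
\]
and whose objective is to maximize $\mathrm{Bound}_{m^\star}(w)$. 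The global worst case is the maximum LP value over all nonempty cases; if this maximum is $\le 1.96365$, the theorem follows from Lemmas~\ref{lem:bound1} and~\ref{lem:bound2} (via Corollary~\ref{cor:erdosturan}).

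The main obstacle is twofold. First, the number of cell-tuples grows rapidly with $K$ and $M$, so some careful bookkeeping and pruning (using monotonicity of $s$ and the feasibility check at each step) is needed to keep the enumeration tractable; this is precisely where computer assistance is essential. Second, and more delicate, is finding parameters $(\tau,\alpha,c)$ that actually reach $1.96365$: one would search for a local extremum by a numerical descent, guided by the intuition that the optimum forces several of the LP-subcases to attain the same value (an equioscillation-type condition), and that adding a further window $c_{M+1}$ should stop improving the bound meaningfully. Finally, to make the certificate rigorous I would run the LPs with rational arithmetic (or round each coefficient in the direction that weakens the bound, as in the proof of Theorem~\ref{thm:twowindow}) so that floating-point error cannot undermine the final inequality.
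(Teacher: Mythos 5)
Your proposal is essentially the paper's own proof: the paper likewise enumerates the interlacing tuples $s$ for each secondary ratio $c_m$ to get polytopal cells with affine cell functions, discards empty cells by LP feasibility, and certifies the max over $w$ of the minimum of the bounds from Lemma~\ref{lem:bound1} and several instances of Lemma~\ref{lem:bound2} via one linear program per (cell-tuple, choice of minimizing bound). The only ingredient you leave unspecified is what the paper supplies concretely: $K=6$, one instance of Lemma~\ref{lem:bound1} plus three of Lemma~\ref{lem:bound2} with explicit values of $\tau$, $c_1,c_2,c_3$, and $\alpha_1,\dots,\alpha_6$ found by local search, for which the computation (about $4\times 10^4$ feasible LP cases) yields the worst-case value $1.963645$.
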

\begin{proof}
We choose $K=6$ and employ four windows in total, using one instance of Lemma~\ref{lem:bound1} and three instances of Lemma~\ref{lem:bound2}. Here are the parameters:

\hspace*{\fill}\def\arraystretch{0}
\begin{tabular}{p{0.25\linewidth} p{0.25\linewidth}}
    {\begin{align*}
    \tau &= 1.12733 \\ \\
    c_1 &= 0.66461 \\
    c_2 &= 0.67780 \\
    c_3 &= 0.71884
\end{align*}} & {\begin{align*}
    \alpha_1 &= 0.70749 \\
    \alpha_2 &= 0.78822 \\
    \alpha_3 &= 0.87175 \\
    \alpha_4 &= 1.12464 \\
    \alpha_5 &= 1.18020 \\
    \alpha_6 &= 1.24610
\end{align*}}
\end{tabular}\hspace*{\fill}

The strategy described in this subsection was implemented in Python, using SciPy \cite{scipy} to solve the relevant linear programs. Code is available at \url{https://github.com/dcartermath/sidon}; we invite the reader to look at the code comments for implementation details. It takes approximately 2 minutes to run on a laptop with an Intel i7-10750H CPU. Each instance of Lemma~\ref{lem:bound2} has 127 nonempty cells. There are a total of 24822 triples of cells that have nonempty intersection, leading to 40964 cases in total (after discarding cases that lead to infeasible linear programs). A worst case value of $w$ is $w_1\approx 0.13398$, $w_2\approx 0.30015$, $w_3\approx 0.46220$, $w_4\approx 0.96476$, $w_5\approx 0.97795$, and $w_6=1$. At this point, all four bounds from Lemmas~\ref{lem:bound1} and~\ref{lem:bound2} are equal (up to rounding) to $1.963645$.
\end{proof}

The parameters chosen in the proof above are a local minimum; nudging any of them by $10^{-5}$ leads to a worse bound. However, there may be better local minimum that we did not find. Some effort was made to explore the parameter space and find a good local minimum. Additionally, increasing the number of parameters (particularly increasing $K$) will almost certainly lead to a better bound on $b_\infty$, at the cost of an exponential increase in the time it takes to verify the proof. Some experiments with bounds resulting from smaller $K$ suggest that one may be able to obtain $b_\infty \le 1.95$ with larger $K$, but we were unable to effectively search for good parameters due to the rapidly increasing computational costs.

\section{\texorpdfstring{$g$-Thin Sidon Sets}{g-Thin Sidon Sets}}
\label{sec:thin}

Our methods may also be used to obtain bounds on the diameters of $g$-thin Sidon sets. Recall that a set of integers $\Ac$ is said to be a \textit{$g$-thin Sidon set} if $\abs{\{(a,b)\in \Ac^2\mid a\ne b, a-b=d\}}\le g$ for all $d$. Sidon sets are the same as 1-thin Sidon sets, and $g$-thin Sidon sets are also called $g$-Golomb rulers.

First, we generalize Theorem~\ref{thm:erdosturan} to the case of $g$-thin Sidon sets:

\begin{theorem}\label{thm:thinerdosturan}
    Let $\Ac$ be a finite $g$-thin Sidon set of integers, and $T$ be a positive integer. Let $D_s$ be the subset of integers in $[1,T)$ with exactly $s$ representations as a difference of elements of $\Ac$. Then
    \[
        \diam(\Ac) = \frac{\abs{\Ac}^2 T^2}{gT(T-1)+\abs{\Ac}T-(2S_g(\Ac,T)+V(\Ac,T))}-T,
    \]
    where $A_i^{(T)} \coloneqq \abs{\Ac \cap [i-T,i)}$,
    \[ S_g(\Ac,T) \coloneqq \sum_{s=0}^{g-1} \sum_{r\in D_s} (g-s)(T-r), \]
    and
    \[ V(\Ac,T) \coloneqq\sum_{i=\min(\Ac)+1}^{T+\max(\Ac)} \left( A_i^{(T)} - \frac{\abs{\Ac} T}{T+\diam(\Ac)}\right)^2. \]
\end{theorem}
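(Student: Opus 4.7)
The plan is to adapt the double-counting proof of Theorem~\ref{thm:erdosturan}, with the $g$-thin hypothesis entering at exactly one step. Write $k\coloneqq|\Ac|$ and $N\coloneqq\diam(\Ac)+T$, which is the number of values of $i$ in the sum defining $V$. Two standard double-counting identities are at the core. The first, obtained by swapping the order of summation, is
\[ \sum_i A_i^{(T)} = kT, \]
since each $a\in\Ac$ lies in exactly $T$ of the windows $[i-T,i)$. For the second, note that for each pair $\{a,b\}\subseteq\Ac$ with $0<b-a=r<T$, both endpoints lie in exactly $T-r$ windows, and for $r\in D_s$ there are precisely $s$ such pairs. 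Hence
\[ \sum_i \binom{A_i^{(T)}}{2} = \sum_{s=0}^{g} s\sum_{r\in D_s}(T-r). \]

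The key algebraic move is to write $s=g-(g-s)$ and use that $\{D_s\}_{s=0}^{g}$ partitions $[1,T)$ (since $\Ac$ is $g$-thin, every difference has at most $g$ representations) to obtain
\[ \sum_i \binom{A_i^{(T)}}{2} = g\binom{T}{2} - S_g(\Ac,T). \]
This is exactly what makes $S_g$ the correct generalization of $S$: it measures the total slack in the inequality ``each difference in $[1,T)$ has at most $g$ representations.'' Setting $g=1$ collapses this to $\binom{T}{2}-S(\Ac,T)$, reproducing the Sidon case.

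From here the argument is formal. The identity $x^2=x+2\binom{x}{2}$ gives
\[ \sum_i (A_i^{(T)})^2 = kT + gT(T-1) - 2S_g(\Ac,T), \]
while expanding the square in the definition of $V$ and using $\bar{A}=kT/N$ together with the first double-counting identity yields
\[ V(\Ac,T) = \sum_i (A_i^{(T)})^2 - \frac{k^2T^2}{N}. \]
Equating the two expressions for $\sum_i (A_i^{(T)})^2$ and solving for $N$, then substituting $\diam(\Ac)=N-T$, produces the claimed formula. I do not expect a genuine obstacle; the only care required is in the boundary cases $s=0$ and $s=g$ of the pair-counting identity (the former contributes nothing to $\sum_i\binom{A_i^{(T)}}{2}$ but contributes $g(T-r)$ to $S_g$, while the latter contributes $g(T-r)$ to the pair count but $0$ to $S_g$), and in verifying that the right-hand side reduces to the Sidon formula when $g=1$.
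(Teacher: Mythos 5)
Your proof is correct and follows essentially the same route as the paper's: both hinge on computing $\sum_i\binom{A_i^{(T)}}{2}$ two ways---once by expanding the square to relate it to $V$ and $\sum_i A_i^{(T)}=kT$, and once by counting pairs by their difference $r\in D_s$ and rewriting $s=g-(g-s)$ to expose $g\binom{T}{2}-S_g(\Ac,T)$. The only cosmetic difference is that the paper imports the identity $\sum_i\binom{A_i^{(T)}}{2}=\tfrac12\tfrac{k^2T^2}{a_k+T}-\tfrac12 kT+\tfrac12 V(\Ac,T)$ by citation rather than deriving it from $x^2=x+2\binom{x}{2}$ as you do.
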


\begin{proof}
Let $\Ac=\{a_i\}_{i=1}^k$, with $a_1<\dots<a_k$. We may assume that $a_1=0$. Let $A_i \coloneqq \abs{\Ac\cap (-\infty, i)}$. As in the proof of Theorem~\ref{thm:erdosturan} appearing in \cite{summerpaper}, we have
\[\sum_{i=1}^{a_k+T} \binom{A_i}{2}= \frac{1}{2} \frac{k^2T^2}{a_k+T} -\frac{1}{2} k T + \frac{1}{2} V(\Ac,T).\]

Fix a difference $r$. Each pair $(x,y)\in \Ac \times \Ac$ with $y-x=r$ contributes to $\binom{A_i}{2}$ for $T-r$ values of $i$. If $r\in D_s$, then there are $s$ such pairs, and so
\begin{align*}
    \sum_{i=1}^{a_k+T} \binom{A_i}{2} 
    &= \sum_{s=0}^g \sum_{r\in D_s} s (T-r) \\
    &= \sum_{s=0}^g \sum_{r=1}^{T-1} (g+(s-g))(T-r)\1_{r\in D_s} \\
    &= \sum_{r=1}^{T-1} \sum_{s=0}^g g(T-r)\1_{r\in D_s} 
        - \sum_{s=0}^{g-1} \sum_{r\in D_s} (g-s)(T-r) \\
    &= g \binom{T}{2} - \sum_{s=0}^{g-1} \sum_{r\in D_s} (g-s)(T-r) \\
    &= g \binom{T}{2} - S_g(\Ac,T).
\end{align*}
Comparing the two expressions for $\sum \binom{A_i}{2}$ yields the claimed equality.
\end{proof}

Recall that Lemmas~\ref{lem:bound1} and~\ref{lem:bound2} were proved by finding a lower bound on $V(\Ac, T)$. Additionally, the proofs of these lemmas never used the fact that $\Ac$ was a Sidon set to obtain the bound on $V(\Ac, T)$; the Sidon set property was only used to apply Theorem~\ref{thm:erdosturan}. Since the definition of $V(\Ac, T)$ is the same in Theorem~\ref{thm:thinerdosturan} as it was in Theorem~\ref{thm:erdosturan}, the definitions of $w_j$, $p_j$, $\zeta_j$, and $\eta_j$ in Section~\ref{sec:nwindow} still make sense in the $g$-thin case, and Lemmas~\ref{lem:bound1} and~\ref{lem:bound2} generalize immediately to the following:

\begin{lemma}\label{lem:thinbound1}
Let $\Ac$ be a finite $g$-thin Sidon set with $k$ elements. Fix constants $\tau$ and $0=\alpha_0<\alpha_1<\dots<\alpha_K<\alpha_{K+1}=\infty$. Define $w$ as in Section~\ref{sec:nwindow}. We have $\diam(\Ac)\ge k^2/g - bk^{3/2}-O(k)$ where
\[ b\le \tau+\frac{1}{\tau g^2}-2\frac{\tau}{g^2}\sum_{j=0}^{K}(w_{j+1}-w_j)\min_{\alpha_j\le z\le \alpha_{j+1}}(z-1)^2. \]
\end{lemma}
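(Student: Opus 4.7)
The plan is to observe that the proof of Lemma~\ref{lem:bound1} cleanly splits into two parts: (i) a combinatorial lower bound on $V(\Ac,T)$, and (ii) an analytic step that converts that bound into a lower bound on $\diam(\Ac)$ via Corollary~\ref{cor:erdosturan}. Part (i) never invokes the Sidon property: the ingredients are only the definitions of $A_i^{(T)}$, $B_i^{(T)}$, $\overline{A}$, and the cutoffs $w_j$, together with the elementary convexity inequality $(x-z)^2+(y-z)^2\ge 2((x+y)/2-z)^2$. These definitions and this inequality make perfect sense for any finite set of integers, so the same argument gives, in the $g$-thin setting,
\[ V(\Ac,T)\ge 2\tau^3 k^{5/2}\sum_{j=0}^{K}(w_{j+1}-w_j)\min_{\alpha_j\le z\le \alpha_{j+1}}(z-1)^2-O(k^2). \]

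The substantive work is then confined to part (ii), where Corollary~\ref{cor:erdosturan} is replaced by its $g$-thin analogue derived from Theorem~\ref{thm:thinerdosturan}. Setting $T=\ceil{\tau k^{3/2}}$ and using the trivial bound $S_g(\Ac,T)\ge 0$, the denominator appearing in Theorem~\ref{thm:thinerdosturan} is at most
\[ gT(T-1)+kT-V(\Ac,T)\le g\tau^2 k^3+(\tau-v)k^{5/2}+O(k^2), \]
where $v$ denotes the constant from part (i). Expanding $1/(1+x)=1-x+O(x^2)$ and subtracting $T$ yields
\[ \diam(\Ac)\ge \frac{k^2}{g}-\left(\tau+\frac{1}{g^2\tau}-\frac{v}{g^2\tau^2}\right)k^{3/2}-O(k), \]
and plugging in the explicit value of $v$ produces exactly the claimed bound.

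The one point deserving care is the bookkeeping in the asymptotic expansion: the extra factor of $g$ in $gT(T-1)$ propagates through the computation, producing the $1/g$ in the leading term $k^2/g$ (reflecting that a $g$-thin set can be roughly $\sqrt{g}$ times denser than a Sidon set), a $1/g^2$ in the coefficient of $k^{3/2}$, and hence the factor $\tau/g^2$ in front of the sum over window pieces. Since parts (i) and (ii) are otherwise formally identical to the Sidon case, no new combinatorial input is required, and no real obstacle arises.
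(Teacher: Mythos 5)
Your proposal is correct and matches the paper's own treatment: the paper likewise observes that the lower bound on $V(\Ac,T)$ from Lemma~\ref{lem:bound1} never uses the Sidon property, and then simply replaces Corollary~\ref{cor:erdosturan} by the asymptotic expansion of Theorem~\ref{thm:thinerdosturan} with $S_g(\Ac,T)\ge 0$, exactly as in your part (ii). (As in the paper, one should note that bounding $\overline{A}\ge(1-o(1))\tau k^{1/2}$ in part (i) tacitly uses that we may assume $\diam(\Ac)\le k^2$, e.g.\ by minimality or because the conclusion is otherwise trivial.)
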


\begin{lemma}\label{lem:thinbound2}
Let $\Ac$ be a finite $g$-thin Sidon set with $k$ elements. Fix constants $\tau$, $c$, and $0=\alpha_0<\alpha_1<\dots<\alpha_K<\alpha_{K+1}=\infty$. Define $p$, $L$, $\zeta$, and $\eta$ as in Section~\ref{sec:nwindow}. We have $\diam(\Ac)\ge k^2/g-bk^{3/2}-O(k)$ where
\[
    b\le c\tau + \frac{1}{c\tau g^2} -2\frac{\tau}{c^2g^2}\sum_{j=1}^{L}(p_j-p_{j-1})\min_{\alpha_{\eta_j-1}-\alpha_{\zeta_j}\le z\le \alpha_{\eta_j}-\alpha_{\zeta_j-1}}(z-c)^2.
\]
\end{lemma}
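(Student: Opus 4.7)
The plan is to follow the proof of Lemma~\ref{lem:bound2} essentially verbatim, changing only the final step where the ETSSE is applied. As the authors note just before stating the lemma, the derivation of the lower bound on $V(\Ac, T')$ in Lemma~\ref{lem:bound2} uses only (i) the definitions of $B_i^{(T)}$, $w_j$, $p_j$, $\zeta_j$, $\eta_j$, and $\overline{A}$, and (ii) simple cardinality bounds for $\abs{\Ac \cap [i-T', i)}$; nowhere is the Sidon hypothesis invoked. So the same lower bound on $V(\Ac, T')$ holds for a $g$-thin set, and only the conversion from a $V$ bound to a $\diam$ bound has to be redone.

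First I would derive a $g$-thin analog of Corollary~\ref{cor:erdosturan}: if $\Ac$ is a $g$-thin Sidon set with $k$ elements and $V(\Ac, \ceil{\tau k^{3/2}}) \ge vk^{5/2} - O(k^2)$, then
\[ \diam(\Ac) \ge \frac{k^2}{g} - \left(\tau + \frac{1}{g^2 \tau} - \frac{v}{g^2\tau^2}\right) k^{3/2} - O(k). \]
This follows by substituting $T=\ceil{\tau k^{3/2}}$ and $S_g(\Ac, T) \ge 0$ into Theorem~\ref{thm:thinerdosturan} and expanding the resulting rational function to leading order. The extra factors of $g$ in the denominator enter because the dominant term of the ETSSE denominator is now $gT(T-1)$, forcing $\diam(\Ac) \sim k^2/g$ rather than $\sim k^2$.

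Next, I would assume without loss of generality that $\diam(\Ac) \le k^2$, since otherwise the stated inequality is trivially satisfied for $g \ge 1$ and all sufficiently large $k$. This assumption yields $\overline{A} = kT/(T+\diam(\Ac)) \ge \tau k^{1/2} - O(1)$, hence $\overline{A}^2 \ge \tau^2 k - O(k^{1/2})$. The proof of Lemma~\ref{lem:bound2} then carries over verbatim: its key step converts the cardinality-derived inequality $\abs{B_i^{(T')} - \overline{A}{}'} \ge \overline{A}\,\min_z\abs{z-c} - O(1)$ into $\abs{B_i^{(T')} - \overline{A}{}'} \ge \tau k^{1/2}\min_z\abs{z-c} - O(1)$, and this step requires only the lower bound $\overline{A} \ge \tau k^{1/2} - O(1)$ rather than the Sidon-specific asymptotic $\overline{A} \sim \tau k^{1/2}$. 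With the symmetrization via $B_i^{(T')}$ and the cell decomposition of $[\min(\Ac)+1, \min(\Ac)+T]$ into the $L$ intervals $[\min(\Ac)+1+p_{j-1}T, \min(\Ac)+p_jT]$ unchanged, this yields
\[ V(\Ac, T') \ge 2\tau^3 k^{5/2} \sum_{j=1}^{L} (p_j - p_{j-1}) \min_{\alpha_{\eta_j-1} - \alpha_{\zeta_j} \le z \le \alpha_{\eta_j} - \alpha_{\zeta_j-1}} (z-c)^2 - O(k^2). \]

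Finally, applying the $g$-thin corollary from the first step with $\tau_{\mathrm{eff}} = c\tau$ (since $T' = \ceil{c\tau k^{3/2}}$) and $v = 2\tau^3 \sum_{j} (p_j-p_{j-1})\min_z (z-c)^2$ yields exactly the stated bound; the substitution $v/(g^2\tau_{\mathrm{eff}}^2) = 2\tau/(c^2g^2) \sum$ produces the coefficient in the final answer. The only place that demands careful bookkeeping is the $g$-dependence of the corollary derived in the first step; beyond that, the proof really is just Lemma~\ref{lem:bound2} with Theorem~\ref{thm:thinerdosturan} swapped in for Theorem~\ref{thm:erdosturan}. The same template simultaneously proves Lemma~\ref{lem:thinbound1} by starting from the $V$ bound of Lemma~\ref{lem:bound1} and using $T$ in place of $T'$.
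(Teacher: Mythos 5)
Your proposal is correct and matches the paper's treatment: the paper gives no separate proof of this lemma, asserting exactly your point that the $V(\Ac,T')$ lower bound in Lemma~\ref{lem:bound2} never uses the Sidon property, so one only swaps Theorem~\ref{thm:thinerdosturan} (with $S_g\ge 0$) for Theorem~\ref{thm:erdosturan} in the final conversion, which is precisely your $g$-thin analogue of Corollary~\ref{cor:erdosturan} with $\tau_{\mathrm{eff}}=c\tau$. Your added care about needing only $\overline{A}\ge \tau k^{1/2}-O(1)$ (rather than $\overline{A}\sim\tau k^{1/2}$, which fails in the $g$-thin setting) is a point the paper glosses over, and it is handled correctly.
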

In these bounds, the error term $O(k)$ is for fixed $g$ and $k\to\infty$.

Similarly to the case of 1-thin Sidon sets, let $\Ac_k^{(g)}$ be a $g$-thin Sidon set with $k$ elements and the minimum possible diameter, and define
\[ b_\infty^{(g)} \coloneqq \limsup_{k\to \infty}\frac{k^2 - \diam(\Ac_k^{(g)})}{k^{3/2}}. \]

For fixed $g$, one can choose parameters $\tau$, $\alpha$, etc. and apply Lemmas~\ref{lem:thinbound1} and~\ref{lem:thinbound2} to find an upper bound on $b_\infty^{(g)}$. Sadly, it seems that locally optimal parameters for a particular $g$ are not related in any simple way to locally optimal parameters for other $g$. Still, we can prove the following:

\begin{theorem}
For any positive integer $g$, there exists $\varepsilon_g>0$ such that $b_\infty^{(g)}\le \frac{2-\varepsilon_g}{g}$.
\end{theorem}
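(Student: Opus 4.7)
The plan is to mimic the two-window argument of Theorem~\ref{thm:twowindow}, with all parameters rescaled for $g$. I would set $\tau = 1/g$ in both Lemmas~\ref{lem:thinbound1} and~\ref{lem:thinbound2} (this minimizes the trivial baseline $\tau + 1/(\tau g^2) = 2/g$), take $K = 2$ with levels $0 < \alpha_1 < 1 < \alpha_2$, and choose a second-window parameter $c = 1 - \delta/g$ for a small constant $\delta > 0$ to be pinned down.

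With these choices, Lemma~\ref{lem:thinbound1} collapses to
\[
    b \le \frac{2}{g} - \frac{2}{g^3}\bigl[w_1(\alpha_1-1)^2 + (1-w_2)(\alpha_2-1)^2\bigr],
\]
which already beats $2/g$ by $\Omega(1/g^3)$ unless $w$ lies in a small neighborhood of the ``pinched corner'' $(w_1, w_2) = (0, 1)$. At this corner, $B_i^{(T)}$ just reaches $\alpha_1\overline{A}$ at $i = \min(\Ac)$ and never exceeds $\alpha_2\overline{A}$ throughout the left-boundary window, so the density profile of $\Ac$ is trapped in a narrow band. Lemma~\ref{lem:thinbound2} exploits exactly this trapping: at the corner, the second bound sits in the cell with $p = (0, c, 1)$ and evaluates (for $c > \alpha_1$) to
\[
    b \le \frac{c + 1/c}{g} - \frac{2}{c^2 g^3}\bigl[c(c - \alpha_1)^2 + (1-c) c^2\bigr].
\]
Expanding in $1/g$ with $c = 1 - \delta/g$ yields $b \le 2/g - \bigl(2(1-\alpha_1)^2 - \delta^2\bigr)/g^3 + O(1/g^4)$, which is strictly less than $2/g$ whenever $\delta^2 < 2(1-\alpha_1)^2$.

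I would then combine the two bounds via a case split: Bound~1 handles $w$ with $w_1(\alpha_1-1)^2 + (1-w_2)(\alpha_2-1)^2 \ge \theta$ for some threshold $\theta = \Theta(1)$, and Bound~2 handles the remaining $w$ near the corner. In both cases the improvement over $2/g$ is $\Omega(1/g^3)$, giving $\varepsilon_g = \Omega(1/g^2)$ and proving the theorem. For the explicit bound $\varepsilon_g \ge 1/(50 g^2)$ announced in the abstract, a concrete choice such as $\alpha_1 = 1/2$, $\alpha_2 = 3/2$, $\delta = 1/2$, together with careful bookkeeping of implicit constants, should suffice.

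The main obstacle is verifying Bound~2 \emph{uniformly} on the corner neighborhood rather than only at the corner itself: Lemma~\ref{lem:thinbound2} is only piecewise affine in $w$, and the cell structure changes as soon as some $w_j + c$ crosses some $w_{j'}$. For $K = 2$ only a handful of cells are adjacent to the corner cell, so each can be handled by an arithmetic computation mirroring the corner computation above. In effect, the two-window argument in the $g$-thin setting balances the loss $(c + 1/c - 2)/g = \Theta(\delta^2/g^3)$ against the gain $\Theta((1-\alpha_1)^2/g^3)$, and choosing $\delta$ sufficiently small relative to $1 - \alpha_1$ guarantees the gain strictly dominates in every cell of the neighborhood.
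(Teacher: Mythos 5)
Your proposal is correct and takes essentially the same approach as the paper: $\tau=1/g$, two levels $\alpha_1<1<\alpha_2$, a second window with $c=1-\Theta(1/g)$, and a case split in which Lemma~\ref{lem:thinbound1} handles $w$ bounded away from the corner $(w_1,w_2)=(0,1)$ while Lemma~\ref{lem:thinbound2} handles the corner region, yielding $\varepsilon_g=\Omega(g^{-2})$. The only divergence is bookkeeping: the paper fixes explicit values ($\alpha_1=0.8$, $\alpha_2=1.2$, $c=25g^2\varepsilon$) and reduces the corner case to the single cell $s=(1,1,3,3)$, whereas you (rightly) plan to check the handful of cells meeting the corner neighborhood, which is exactly the delicate point when that neighborhood has constant size rather than size $O(1-c)$.
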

This result was first stated in an equivalent form in~\cite{bfr} (Theorem 6.2), though a detailed proof was omitted.
\begin{proof}
The case $g=1$ has, of course, already been dealt with. Fix $g\ge 2$, and with foresight set
\[ \varepsilon = \frac{100g^2-7g-89}{50g^2(50g^2-49)}. \]
Let $\Ac$ be a $g$-thin Sidon set with $k$ elements. Set
\begin{align*}
    \tau &= 1/g \\
    \alpha_1 &= 0.8 \\
    \alpha_2 &= 1.2 \\
    c &= 25 g^2\varepsilon = \frac{100g^2-7g-89}{2(50g^2-49)}.
\end{align*}
Note $c<1$ since $g\ge 2$; also $c>\alpha_1$. From Lemma~\ref{lem:thinbound1}, we know $\diam(\Ac)\ge k^2/g-bk^{3/2}+O(k)$ where
\[ b \le \frac{2}{g} - \frac{2}{25g^3}(1+w_1-w_2). \]
If $w_1$ is at least $c/2$ or $w_2$ is at most $1-c/2$, then this will give us $b\le \frac{2-\varepsilon}{g}$. In any other case, we will apply Lemma~\ref{lem:thinbound2}. Since we have $0\le w_1\le c\le w_1+c\le w_2\le 1\le w_2+c$, we lie in the cell corresponding to $s=(1,1,3,3)$. We have $p=(0, w_1, c, w_1+c, w_2, 1)$ (0-indexed), $\eta=(1,2,2,2,3)$ (1-indexed), and $\zeta=(0,0,1,2,2)$ (1-indexed), so we know
\begin{align*}
    b &\le \left(c+\frac{1}{c}\right)\frac{1}{g} - \frac{2}{c^2g^3}(c-w_1)(0.8-c)^2 \\
    &\le \left(c+\frac{1}{c}\right)\frac{1}{g} - \frac{1}{cg^3}(0.8-c)^2 \\
    &= \frac{500000g^{6}-35000g^{5}-943775g^{4}+38150g^{3}+447500g^{2}-3710g-2809}{50\left(50g^{2}-49\right)\left(100g^{2}-7g-89\right)g^{3}}.
\end{align*}
Call the right-hand side of the above inequality $X$. Then finally note that
\begin{align*}
    \frac{2-\varepsilon/2}{g} - X &= \frac{151g^{2}-126g+47}{100g^{3}\left(100g^{2}-7g-89\right)} \\
    &\ge 0
\end{align*}
since $g\ge 2$. This completes the proof, with $\varepsilon_g=\varepsilon/2$.
\end{proof}
This proof shows we may take $\varepsilon_g=\Omega(g^{-2})$; in particular, $\varepsilon_g\ge \frac{1}{50g^2}$. There some flexibility in the proof above, so the constant $1/50$ can be optimized, but it seems our methods cannot improve the exponent on $g$. This is because the bounds on $V(\Ac, T)$ we obtain only impact the bound on $b_\infty^{(g)}$ by $\Theta(g^{-3})$, while the choice of $\tau$ and $c$ impacts the bound by $\Theta(g^{-1})$. It is no coincidence, then, that we take $\tau\sim 1/g$ and $c\sim 1$ in this proof; in the limit, only this choice allows both Lemma~\ref{lem:thinbound1} and Lemma~\ref{lem:thinbound2} to give $b_\infty^{(g)}\le 2g^{-1}+o(g^{-1})$. Is it possible that another method can do asymptotically better, say to $\varepsilon_g=\Omega(g^{-1.999})$?

\printbibliography
    
\end{document}